\newtheorem{thm}{Theorem}[section]
\numberwithin{equation}{section}
\date{}
\begin{document}

\title{On Summation of $p$-Adic  Series}

\author{Branko Dragovich}   
\address{$^1$Institute of Physics, University of Belgrade,   Belgrade, Serbia }
\address{$^2$Mathematical Institute, Serbian Academy of Sciences and Arts, Belgrade, Serbia }
\email{dragovich@ipb.ac.rs}

\keywords{$p$-adic series, $p$-adic numbers, sequences of integers, generating polynomials}

\subjclass[2010]{40A05, 40A30}

\begin{abstract}
Summation of the $p$-adic functional series $\sum \varepsilon^n \, n! \, P_k^\varepsilon (n; x)\, x^n ,$ where
$P_k^\varepsilon (n; x)$ is a polynomial in $x$ and $n$ with rational coefficients, and $\varepsilon = \pm 1$, is considered. The series is convergent
in the domain $|x|_p \leq 1$ for all primes $p$. It is found the general form of polynomials $P_k^\varepsilon (n; x)$ which
provide rational sums when $x \in \mathbb{Z}$.  A class of generating polynomials $A_k^\varepsilon (n; x)$  plays a central role in the summation procedure. These generating polynomials  are  related to many sequences of integers. This is a brief review with some new results.

\end{abstract}

\maketitle

\section{Introduction}

In the last thirty years $p$-adic analysis has had successful applications in $p$-adic mathematical physics (from strings to complex systems
and the universe as a whole) and in some related fields (in particular in bioinformation systems, see, e.g. \cite{bd01}), see \cite{freund,vvz} for an early review and \cite{bd02} for a recent one. The $p$-adic series, as a part of
$p$-adic analysis, have been also considered as mathematical tools of potential applications in physics. The main reason was in
the following. There are series, in particular the power series, of the same form in physics and $p$-adic analysis. Recall that the results of
physical measurements are rational numbers which belong to all $p$-adic number  fields $\mathbb{Q}_p$ as well as to $\mathbb{R}$. Hence,
there is a sense to consider $p$-adic series in rational points, i.e. when argument and sum are some rational numbers, and look for their  possible
physical content. One of motivations was related to the convergence of  series. Namely, the power series $\sum a_n \, x^n ,$ where $a_n \in \mathbb{Q}$,  can be treated simultaneously as $p$-adic or as real, depending on values of the argument $x$. Many series which are divergent in the real case are convergent
in the $p$-adic one. There are many examples of divergent series, in particular, in perturbation expansions of quantum field theory and string theory,
which diverge due to factorials. This problem of divergences in some real series motivated an investigation of various series with factorials with respect to
the $p$-adic norm. To this end,  many $p$-adic convergent series have been constructed and found their rational sums for some rational arguments.

In this paper we are interested in $p$-adic invariant summation of a class of infinite functional series which terms contain $n!$, i.e.
$\sum \varepsilon^n \, n!\, P_k^\varepsilon (n; x)\, x^n ,$ where $\varepsilon = \pm 1 $ and  $P_k^\varepsilon (n; x)$ are polynomials in $x$  and $n$ of degree $k $. We  show that there exist
polynomials $P_k^\varepsilon (n; x)$ for any degree $k ,$ such that for any $x \in \mathbb{Z}$ the corresponding sums are also  integer or rational numbers. Moreover, we have found recurrence relations  to calculate all ingredients of such  $P_k^\varepsilon (n; x)$.

 All necessary general information on $p$-adic series can be found in standard books on $p$-adic analysis, see, e.g. \cite{schikhof}.

\section{Series and $p$-adic invariant summation in integer points}

We will give now a brief review of some $p$-adic series with factorials presented in the papers
\cite{bd1,bd2,bd3,bd4,bd5,bd6,bd7,bd8,bd9,bd10,bd11}.

Recall that necessary and sufficient condition for the $p$-adic power series
to be convergent  is that the general term vanishes as degree goes to infinity  \cite{schikhof,vvz},  i.e.
\begin{align}
S(x) = \sum_{n=1}^{+\infty} a_n x^n ,   \quad a_n \in  \mathbb{Q}_p ,
\quad x \in \mathbb{Q}_p ,  \quad |a_n x^n|_p \to 0 \,\, \text{as} \,\, n
\to \infty ,     \label{2.1}
\end{align}
where $|\cdot|_p$ denotes the $p$-adic absolute value (also called $p$-adic norm). As in the real case,
convergent infinite $p$-adic series can be summed using usual rules as for the finite series.

We are interested here in the series which are simultaneously convergent with respect to $p$-adic absolute value for
every prime number $p$. One of the simplest such examples is the well known series
\begin{align}
\sum_{n=0}^\infty n! = 0! + 1! + 2! + ... + n! + ... , \, \quad \, \, |n!|_p = p^{-\frac{n -s_n}{p-1}} \to 0 \, \, (n \to \infty) ,      \label{2.2}
\end{align}
where $ s_n = n_0 + n_1 + ... + n_r $ is the sum of digits in the canonical expansion of  number $n$ in base $p$, i.e. $n = n_0 + n_1 p + ... + n_r p^r  .$
It can be shown that if the sum of series \eqref{2.2} has a rational value then it cannot be the same for all
$p$-adic cases. Probably the sum of series \eqref{2.2} is  irrational number  in all $\mathbb{Q}_p$.

From an application point of view it is more interesting to investigate $p$-adic series which sum is a rational number, the same for all primes $p$.
As a simple illustrative example one can  refer to
 \begin{align}
\sum_{n=1}^\infty n!\ n =  1!\ 1 + 2!\ 2 + ... + n! \ n + ... = - 1 \, .       \label{2.3}
\end{align}
In the proof of \eqref{2.3} one can employ any of the following two properties:
\begin{align}
  (i): \, \,  n!\ n = (n+1)! - n! \,, \quad \, \qquad  (ii): \, \,  \sum_{n = 1}^{N-1} n! \, n  = -1 + N! \,.   \label{2.4}
\end{align}

In this section we present a review of some convergent $p$-adic series, where summation is performed using mainly property like  $(i)$ in
\eqref{2.4}. Method similar to $(ii)$ of \eqref{2.4} will be employed in the next section.

In \cite{bd1} was shown that the ground-state energy perturbation series of the anharmonic oscillator
\begin{align}
E (\lambda)  = \frac{1}{2} + \sum_{n=1}^\infty (-1)^{n+1} a_n 2^{-2n} \lambda^n \, ,  \quad  a_n \in \mathbb{N} \, , \label{2.5}
\end{align}
which is  divergent in  the real case ($a_1 =3, a_2 = 2\cdot 3\cdot 7,  a_3 = 2^2 \cdot 3^2 \cdot 37, ... $), is $p$-adic convergent for $|\lambda|_p < 1$ if $p \neq 2$ and $|\lambda|_2 < \frac{1}{4}$.
Paper \cite{bd2} contains two similar examples to \eqref{2.5} and the following one:
\begin{align}
F (x) = \sum_{n=1}^\infty (-1)^{n+1}\, (n!)^k \, (n^\ell + w)^m \, x^n \, , \quad w \in \mathbb{Z} , \, k \in \mathbb{N} ,
\, m \in \mathbb{N}\cup \{ 0 \} \, ,  \label{2.6}
\end{align}
which is convergent for every $p$ in the range $|x|_p \leq 1$ and consequently includes all integers.  After suitable rearrangement of the series terms  for $F (x)$ and $-F(x)$, and then taking $F(x)+ (-F(x)) = 0 $,
one obtains  formula
\begin{align}
\sum_{n=1}^\infty (n!)^k \{(n+1)^k [(n+1)^\ell + w]^m \, x - (n^\ell + w)^m  \} \, x^{n-1}  = -(1 + w)^m   .     \label{2.7}
\end{align}
One can easily see that \eqref{2.7} contains \eqref{2.3} as a particular case.

In \cite{bd2} was also discussed possible connection between integer sums of $p$-adic series convergent in $|x|_p \leq 1$ for all $p$ and its
real counterpart which is divergent. Recall that the sum of the divergent series depends on the way how summation is performed. Using the same summation
procedure as in $p$-adic convergent cases will give the same integer value in the real divergent counterpart. This kind of summation of divergent series
was called adelic summation.   For example, series \eqref{2.3}  is highly
divergent from real point of view, but according to the adelic summation the corresponding real sum should be also $-1$. Perhaps this  adelic summation
will find an application in physics and related sciences.

 Inspired by  divergent series in a zero-dimensional model of quantum field theory, in paper \cite{bd4} the following $p$-adic series was considered
\begin{align}
F (x) = \sum_{n=1}^\infty (-1)^{n+1} \prod_{i=1}^\alpha((\mu_i n + \nu_i )! )^{k_i} P_\ell (n) \, x^n \,,  \label{2.8}
\end{align}
where $\alpha, \mu_i \in \mathbb{N}, \, \, \nu_i \in \mathbb{Z},  \, \mu_i + \nu_i \geq 1$ and $\ell , k_i \in \mathbb{N}_0$, and at least one of $k_i \geq 1$.
The domain of convergence contains $|x|_p \leq 1$ and hence includes $x \in \mathbb{Z} .$
By suitable rearrangement of terms for $F (x)$ and $-F(x)$, and then taking $F(x)+ (-F(x)) = 0 $, one obtains summation equality
\begin{align}
\sum_{n=1}^\infty \prod_{i=1}^\alpha ((\mu_i n + \nu_i )! )^{k_i} &\left[ \prod_{i=1}^\alpha (\mu_i n + \nu_i + 1)_{\mu_i}^{k_i} P_\ell (n +1) \, x  - P_\ell (n)    \right]\, x^{n-1} \nonumber \\ & = - \prod_{i=1}^\alpha((\mu_i  + \nu_i )! )^{k_i} P_\ell (1)\, ,    \label{2.9}
\end{align}
where $(\mu_i n + \nu_i + 1)_{\mu_i}^{k_i} = (\mu_i n + \nu_i + 1)^{k_i} (\mu_i n + \nu_i + 2)^{k_i}  ... (\mu_i n + \nu_i + \mu_i)^{k_i}$. Polynomial $P_\ell (n)$ is
\begin{align}
P_\ell (n) = c_\ell n^\ell + c_{\ell-1} n^{\ell -1} + ... + c_0 \, ,          \label{2.10}
\end{align}
where $c_0 , c_1,  ... , c_\ell$ are  some integers.
Taking $x = +1$ or $-1$ and various values of parameters in \eqref{2.9} one gets sum for plenty of $p$-adic numerical series.

Paper \cite{bd5} is devoted to rational summation of the wider class  of $p$-adic series
\begin{align}
S^\varepsilon (x) = \sum_{n=1}^\infty \varepsilon^{n} \prod_{i=1}^I((\mu_i n + \nu_i )! )^{\lambda_i} P_k (n) \, x^{\alpha n + \beta} \,,  \label{2.11}
\end{align}
where $\varepsilon = \pm 1, \, \, \, I, \mu_i, \alpha \in \mathbb{N}, \, \, \nu_i \in \mathbb{Z}, \, \mu_i + \nu_i \geq 1 , \, \, \, \lambda_i, \beta \in \mathbb{N}_0$ and at least one of $\lambda_i \in \mathbb{N} .$ \  \
 $P_k (n)$ is a polynomial
\begin{align}
P_k (n) = C_k n^k + C_{k-1} n^{k -1} + ... + C_0 \, ,          \label{2.12}
\end{align}
where $C_0 , C_1,  ... , C_k \in \mathbb{Z}$. The domain of convergence of the series \eqref{2.11} is
\begin{align}
|x|_p < p^{\frac{1}{(p-1)\alpha} \sum_{i =1}^I \mu_i \lambda_i}  \label{2.13}
\end{align}
which includes $x \in \mathbb{Z}$ for all $p$-adic norms.

 As rational summation  one has in mind summation of convergent series, like  \eqref{2.11},
that for a rational argument $x$ exists a rational sum  $S (x)$.  For the series \eqref{2.11}, rational summation makes a restriction on possible
polynomials $P_k (n)$.  To find suitable polynomials $P_k (n)$  it is useful to introduce an auxiliary polynomial in  \eqref{2.11}
\begin{align}
P_\ell (n) = a_\ell \, n^\ell + a_{\ell-1} \,  n^{\ell -1} + ... + a_0 \, ,          \label{2.14}
\end{align}
where $0 \leq \ell < k$ and  $a_0 , a_1,  ... , a_\ell$ are  some integer numbers. Using the same summation procedure as in the previous cases,
and replacing $P_k (n)$ by $A_\ell (n)$ in \eqref{2.11}, follows the   equality
\begin{align}
\sum_{n=1}^\infty  \varepsilon^n \prod_{i=1}^I ((\mu_i n + \nu_i )! )^{\lambda_i} &\left[ \prod_{i=1}^I (\mu_i n + \nu_i + 1)_{\mu_i}^{\lambda_i} A_\ell (n +1) \, x^\alpha  - \varepsilon A_\ell (n)    \right]\, x^{\alpha n + \beta} \nonumber \\ & = - \prod_{i=1}^I((\mu_i  + \nu_i )! )^{\lambda_i} A_\ell (1)\, x^{\alpha + \beta} ,    \label{2.15}
\end{align}
where $(\mu_i n + \nu_i + 1)_{\mu_i}^{\lambda_i} = (\mu_i n + \nu_i + 1)^{\lambda_i} (\mu_i n + \nu_i + 2)^{k_i}  ... (\mu_i n + \nu_i + \mu_i)^{\lambda_i}$. Summation expression \eqref{2.15} is valid in the domain of convergence of \eqref{2.11}, i.e. includes all $x \in \mathbb{Z} .$  Taking $x = t \in \mathbb{Q}$ which satisfies the domain of convergence \eqref{2.13} one can construct a polynomial
\begin{align}
P_k (n;  t) = \prod_{i=1}^I (\mu_i n + \nu_i + 1)_{\mu_i}^{\lambda_i} A_\ell (n +1) \, t^\alpha  - \varepsilon A_\ell (n)  \label{2.16}
\end{align}
where $k = \ell + \sum_{i=1}^I \mu_i \lambda_i$,  which depends on $t$ and leads to a rational sum of \eqref{2.11}.
For a given polynomial $P_k (n)$ the series \eqref{2.11} has a rational sum if there exists an auxiliary polynomial $A_\ell (n)$ such that expression \eqref{2.16} is satisfied.
When $t \in \mathbb{Z}$ then the corresponding sum is an integer the same in all $\mathbb{Z}_p$,
where $\mathbb{Z}_p$ is the ring of $p$-adic integers. In the next section we shall see how one can construct all related polynomials $P_k (n; x)$ for any degree $k$.

In \cite{bd6} is presented extension of summation formula \eqref{2.15} in the form
\begin{align}
\sum_{n=1}^\infty  \varepsilon^n \frac{\prod_{i=1}^I ((\mu_i n + \nu_i )! )^{\lambda_i}}{\prod_{j=1}^J ((\rho_j n + \sigma_j )! )^{\eta_j}}
&\left[ \frac{\prod_{i=1}^I (\mu_i n + \nu_i + 1)_{\mu_i}^{\lambda_i}}{\prod_{j=1}^J (\rho_j n + \sigma_j + 1)_{\rho_j}^{\eta_j}} A_\ell (n +1) \, x^\alpha  - \varepsilon A_\ell (n)    \right]\, x^{\alpha n + \beta} \nonumber \\ & = - \frac{\prod_{i=1}^I((\mu_i  + \nu_i )! )^{\lambda_i}}{\prod_{j=1}^J((\rho_j  + \eta_j )! )^{\eta_j}}  A_\ell (1)\, x^{\alpha + \beta} .    \label{2.17}
\end{align}
Paper \cite{bd7} contains an elaboration of rational summation considered in \cite{bd5}.

Papers \cite{bd8,bd9,bd10} were presented at conferences on $p$-adic functional analysis and published in their proceedings. A very simple case of \eqref{2.15}
is
\begin{align}
\sum_{n=0}^\infty n! \, [(n+1) A_\ell (n+1) - A_\ell (n)] = - A_\ell (0)  ,   \label{2.18}
\end{align}
where $A_\ell (n)$ is defined by \eqref{2.14}. It is obvious that all possible polynomials $A_\ell (n)$ generate the corresponding polynomials
$P_k (n) = (n+1) A_\ell (n+1) - A_\ell (n)$, with $k = \ell +1 \geq 1$, which give integer sum $- A_\ell (0)$, i.e.
\begin{align}
\sum_{n=0}^\infty n! \, P_k (n) = - A_\ell (0)  .   \label{2.19}
\end{align}
It is useful  to investigate the series of the form
\begin{align}
\sum_{n=0}^\infty n! \, [n^k  + u_k] = v_k  ,   \label{2.20}
\end{align}
where  $u_k , v_k$ are pairs of integers. The series \eqref{2.20} is analyzed in \cite{bd8}, where
\begin{align}
(n+1) A_{k-1} (n+1) - A_{k-1} (n) = n^k  + u_k   , \quad k = 1, 2, 3, ....   \label{2.21}
\end{align}
Note that \eqref{2.21} contains  a system of $k +1$ linear equations with $k +1$ unknowns $(a_0, a_1, ..., a_{k-1}, u_k)$   which has always solution, and
 that $v_k = - A_{k-1}(0)$. It was also shown that pairs of integers $u_k , v_k$ are unique. If we know integers $u_1 , u_2, ... , u_k$ and $v_1 , v_2, ... , v_k$
 then  one can write general summation formula
\begin{align}
\sum_{n=0}^\infty n! \, P_k (n) = Q_k  ,  \quad P_k (n) = C_k\, n^k  + ... + C_1\, n + C_0 , \quad  Q_k = \sum_{j=1}^k C_j \, v_j  ,  \label{2.22}
\end{align}
where $C_0 = \sum_{j=1}^k C_j \, u_j $  and    $C_0, C_1, ... , C_k \in \mathbb{Q}$. Hence only these polynomials $P_k (n)$ which are defined by \eqref{2.22}
give $p$-adic invariant rational summation, where nontrivial role plays term $C_0$.

In paper \cite{bd9} a modified generalized hypergeometric series was considered, the domain of convergence was found and a summation formula was derived.
It was also reconsidered summation formula \eqref{2.20} and related recurrence relation was found (see also \cite{bd10})
\begin{align}
S_{k+1} (n) =  \delta_{0k} - k S_k (n) - \sum_{\ell =0}^{k-1} \binom{k+1}{\ell}  S_\ell (n)  + n! n^k \, ,  \label{2.23}
\end{align}
where $S_k (n) = \sum_{i=0}^{n-1} i!\, i^k $ and $\delta_{0k}$ is the Kronecker  symbol ($\delta_{0k} =1$ if $k=0$ and $\delta_{0k} = 0$ if $k\neq 0$).
From \eqref{2.23} follows
\begin{align}
\sum_{i=0}^{n-1} i! \,(i^k + u_k) = v_k  + n! A_{k-1} (n)  \,,   \label{2.24}
\end{align}
where $u_k$ and $v_k$  satisfy the following recurrence relations:
\begin{align}
&u_{k+1} = -k u_k - \sum_{\ell =1}^{k-1} \binom{k+1}{\ell} u_\ell + 1 \, ,  \quad  u_1 = 0, \, \, k\geq 1  \, , \label{2.25} \\
&v_{k+1} = -k v_k - \sum_{\ell =1}^{k-1} \binom{k+1}{\ell}  v_\ell - \delta_{0k} \, ,  \quad k\geq 0 \, . \label{2.26}
\end{align}
Note that $A_k (n)$ is a polynomial in $n$ and in limit $n \to \infty$ formula \eqref{2.24} becomes
\eqref{2.20}. Using recurrence relations  \eqref{2.25} and \eqref{2.26} one can calculate $u_k$
and $v_k$, which the first eleven values are presented in Table 1.

Note that in \cite{bd9} is introduced the following functional summation formula:
\begin{align}
\sum_{i=0}^\infty n! \,[n^k x^k + U_k (x)]\, x^n = V_k (x)   \,,   \label{2.27}
\end{align}
where $U_k (x)$ and $V_k (x)$ are certain polynomials in $x$ with integer coefficients. When $x =1$ we have
$U_k (1)= u_k$ and $V_k (1) = v_k$.

Paper \cite{bd9}  contains a  proof that the sum of $p$-adic series
\begin{align}
\sum_{n=0}^\infty n! \, n^k \, x^n  \, , \quad  k \in \mathbb{N}_0 \, , \,\, x \in \mathbb{N}\setminus \{ 1 \}  \label{2.28}
\end{align}
cannot be the same rational number  in $\mathbb{Z}_p$ for every $p$. It also contains conjecture that the sum of \eqref{2.28} is
a rational number iff $k=x = 1$. In particular, according to this conjecture $u_k \neq 0$ in \eqref{2.20} when $k \neq 1$ and the corresponding sums $\sum n! n^k$ are $p$-adic irrational numbers. Following this research in \cite{murty} was shown that $u_k \neq 0$ for $k \equiv 0$ or  $2$ (mod 3). Then has shown \cite{alexander} that $u_k$ can be zero at most twice (see also \cite{subedi1,subedi2}).

\begin{table}
 \begin{center}
 {\begin{tabular}{|c|c|c|c|c|c|c|c|c|c|c|c|}
 \hline \ & \ & \ & \ & \ & \ & \ & \ & \ & \ & \ & \ \\
 k & 1  & 2 & 3 & 4 &  5 & 6 & 7 & 8 & 9 & 10 & 11 \\
  \hline \  & \  &  \ & \ & \ & \ & \ & \ & \ & \ & \ & \ \\
 $u_k$ &  0  & 1 & -1 & -2 & 9 & -9 & -50 & 267 & -413 & -2180 & 17731 \\
  \hline \  & \  & \  &  & \ & \ & \ & \ & \ & \ & \ & \ \\
 $v_k$ & -1 & 1 & 1 & -5 & 5 & 21 & -105 & 141 & 777 & -5513 & 13209 \\
 \hline
\end{tabular}}{}
\vspace{4mm}
\caption{ The first eleven values of $u_k$ and $v_k$ \cite{bd9,bd10}. Note that in encyclopedia of
integer sequences \cite{sloane}, sequence $u_k$ overlaps with  A000583 in absolute values, while
 $v_k$ coincides with A014619.}
\end{center}
\end{table}

Paper \cite{bd11} is devoted to  the first-order and second-order
 differential equations which contain as a solution an analytic function of the form
\begin{align}
F_k (x) = \sum_{n=0}^\infty n! \, P_k (n) \, x^n  \, , \quad  k \in \mathbb{N}_0 \, , \,\, |x|_p < p^{\frac{1}{p-1}} \, ,  \label{2.29}
\end{align}
where $P_k (n) = n^k + C_{k-1} n^{k-1} + ... + C_0$ is a polynomial in $n$ with $C_i \in \mathbb{Z}$ (in a more general case $C_i \in \mathbb{Q}$ or $C_i \in \mathbb{C}_p$). It is shown the existence of linear differential equations which solution is $F_k (x)$  for any polynomial $P_k (n)$ and some of such equations are constructed. For example, the related first-order and second-order linear differential  equations for $F_0 (x) = \sum_{n=0}^\infty n! \, x^n$ are:
\begin{align}
&x^2 \, F_0'(x) + (x-1)\, F_0 (x) = -1 \, ,  \label{2.30} \\
&x^2 \, F_0''(x) + (3x-1) \, F_0'(x) +  F_0(x) = 0 \, .  \label{2.31}
\end{align}

\section{Functional summation formula and  polynomials $A_k^\varepsilon (n; x)$}

This section is devoted to investigation of the finite functional summation formula of the form
\begin{align}
\sum_{i=0}^{n-1} \varepsilon^i i! \,[i^k x^k + U_k^\varepsilon (x)]\, x^i = V_k^\varepsilon(x)  + n!\, A_{k-1}^\varepsilon (n; x)  \,,   \label{3.1}
\end{align}
where $\varepsilon = \pm 1$ and  $A_k^\varepsilon (n; x)$ are certain polynomials in $x$, which coefficients are polynomials in $n$ (also of degree $k$) with integer coefficients.  $U_k^\varepsilon (x)$ and $V_k^\varepsilon (x)$ are polynomials related to the  polynomials $A_{k-1}^\varepsilon (n; x)$ as follows:
\begin{align}
U_k^\varepsilon (x) = x\, A_{k-1}^\varepsilon (1; x) - \varepsilon \, A_{k-1}^\varepsilon (0; x) \, ,  \quad V_k^\varepsilon (x) = - \varepsilon A_{k-1}^\varepsilon (0; x) . \label{3.2}
\end{align}
If $x =1$ and $\varepsilon = + 1$ then summation formula \eqref{3.1} reduces to \eqref{2.24}. When $\varepsilon = + 1$ and  $n \to \infty $ formula \eqref{3.1} evidently  becomes
\eqref{2.27}.

This section is based on papers \cite{bd12,bd13} and their elaboration with some new results. Here the main role is plaid by the polynomials $A_k^\varepsilon (n; x)$, because they contain all information on properties of summation formula \eqref{3.1} .

\begin{thm}   \label{Th1}
Let $ S_k^\varepsilon (n; x) = \sum_{i=0}^{n-1} \varepsilon^i \, i! \, i^k \, x^i $, where $\varepsilon = \pm 1$. Then one has the recurrence formula
\begin{align}
S_k^\varepsilon (n; x) = \delta_{0k} + \varepsilon \, x  \,  S_{0}^\varepsilon (n; x) + \varepsilon \, x  \, \sum_{\ell =1}^{k+1} \binom{k+1}{\ell}\, S_{\ell}^\varepsilon (n; x) - \varepsilon^n \, n! \, n^k \, x^n \,.  \label{3.3}
\end{align}
\end{thm}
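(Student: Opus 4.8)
The plan is to evaluate one cleverly chosen shifted sum in two different ways and then equate the results, which delivers the recurrence directly. Concretely, I would introduce the auxiliary quantity
\[
T = \sum_{i=0}^{n-1} \varepsilon^{i+1}\,(i+1)!\,(i+1)^{k}\,x^{i+1},
\]
namely $S_k^\varepsilon$ with its summation index advanced by one, and obtain \eqref{3.3} by comparing a ``shifted'' and a ``factored'' expression for $T$. The whole argument is a finite algebraic identity, so no $p$-adic convergence is needed.

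First I would compute $T$ via the substitution $j = i+1$. As $i$ runs from $0$ to $n-1$, the index $j$ runs from $1$ to $n$, so $T = \sum_{j=1}^{n} \varepsilon^{j}\, j!\, j^{k}\, x^{j}$. Matching this against the definition of $S_k^\varepsilon$, I would note that $\sum_{j=1}^{n}$ differs from $\sum_{j=0}^{n} = S_k^\varepsilon(n+1;x)$ only by the $j=0$ term, which equals $0^{k}=\delta_{0k}$ (using the convention $0^{0}=1$), while $S_k^\varepsilon(n+1;x) = S_k^\varepsilon(n;x) + \varepsilon^{n}\,n!\,n^{k}\,x^{n}$. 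This produces the first expression
\[
T = S_k^\varepsilon(n;x) + \varepsilon^{n}\,n!\,n^{k}\,x^{n} - \delta_{0k}.
\]

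Second, I would compute $T$ directly from its original form. Writing $(i+1)! = (i+1)\,i!$ so that $(i+1)!\,(i+1)^{k} = i!\,(i+1)^{k+1}$, and factoring $\varepsilon^{i+1}x^{i+1} = \varepsilon x\cdot\varepsilon^{i}x^{i}$, I get $T = \varepsilon x \sum_{i=0}^{n-1} \varepsilon^{i}\, i!\, (i+1)^{k+1}\, x^{i}$. Expanding $(i+1)^{k+1} = \sum_{\ell=0}^{k+1}\binom{k+1}{\ell} i^{\ell}$ by the binomial theorem and interchanging the two finite sums yields $T = \varepsilon x \sum_{\ell=0}^{k+1}\binom{k+1}{\ell} S_\ell^\varepsilon(n;x)$; separating the $\ell=0$ term (where $\binom{k+1}{0}=1$) gives
\[
T = \varepsilon x\, S_0^\varepsilon(n;x) + \varepsilon x \sum_{\ell=1}^{k+1}\binom{k+1}{\ell} S_\ell^\varepsilon(n;x).
\]

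Finally I would equate the two expressions for $T$ and solve for $S_k^\varepsilon(n;x)$, which reproduces \eqref{3.3} at once. There is no genuine analytic difficulty, since each step is an equality of finite sums valid term by term; the only points demanding care are the boundary bookkeeping—correctly accounting for the $j=0$ contribution as $\delta_{0k}$ and for the $i=n$ term dropped in passing from $S_k^\varepsilon(n+1;x)$ to $S_k^\varepsilon(n;x)$—and the (automatic) legitimacy of swapping the order of the two finite summations. Because the derivation is purely combinatorial, the resulting identity holds in $\mathbb{Z}[x]$ for every $n$, every $k\in\mathbb{N}_0$, and both signs $\varepsilon=\pm 1$.
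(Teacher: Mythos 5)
Your proof is correct: the two evaluations of $T=\sum_{i=0}^{n-1}\varepsilon^{i+1}(i+1)!\,(i+1)^k x^{i+1}$ (index shift versus the factorization $(i+1)!\,(i+1)^k=i!\,(i+1)^{k+1}$ followed by the binomial expansion) match term by term, and the boundary contributions $\delta_{0k}$ and $\varepsilon^n n!\,n^k x^n$ are accounted for properly. This is essentially the same shift-and-rearrange argument the paper relies on (it defers the details to \cite{bd12,bd13}, but the technique is the one announced via property $(ii)$ of \eqref{2.4} and used for \eqref{2.7}, \eqref{2.9} and \eqref{2.15}), so no further comment is needed.
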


\begin{thm}
\label{Th2}
The recurrence relation \eqref{3.3} has solution in the form
\begin{align}
\sum_{i=0}^{n-1} \varepsilon^i \, i!\, [i^k \, x^{k} \, + U_{k}^\varepsilon (x)] \, x^{i}
= V_{k-1}^\varepsilon (x) + A_{k-1}^\varepsilon (n; x) \,\varepsilon^{n-1} \, n!\, x^n \,, \label{3.4}
\end{align}
where polynomials $U_{k}^\varepsilon (x) \,, V_{k-1}^\varepsilon (x)$ and $A_{k-1}^\varepsilon (n; x)$ satisfy the following recurrence relations:
\begin{align}
&\sum_{\ell =1}^{k+1} \binom{k+1}{\ell} \, x^{k-\ell+1} \, U_{\ell}^\varepsilon(x) - \varepsilon \, U_{k}^\varepsilon(x) - x^{k+1} \, = 0 \,, \quad U_{1}^\varepsilon (x) = x - \varepsilon \,, \label{3.5} \\
&\sum_{\ell =1}^{k+1} \binom{k+1}{\ell} \, x^{k-\ell+1} \, V_{\ell-1}^\varepsilon(x) - \varepsilon \, V_{k-1}^\varepsilon(x)\, + \delta_{0k}\, \varepsilon
 x = 0 \,, \quad V_0^\varepsilon (x) = - \varepsilon  \,,   \label{3.6}\\
&\sum_{\ell =1}^{k+1} \binom{k+1}{\ell} \, x^{k-\ell+1} \, A_{\ell-1}^\varepsilon(n;x) - \varepsilon \, A_{k-1}^\varepsilon(n;x) - n^k \, x^{k} \, = 0 \,,    \label{3.7}
\end{align}
where $ A_0^\varepsilon (n; x) = 1$ and  $k \in \mathbb{N}$.
\end{thm}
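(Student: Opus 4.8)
The plan is to treat \eqref{3.4} as an ansatz for the solution of the recurrence \eqref{3.3} and to \emph{derive} the three families $U_k^\varepsilon(x)$, $V_{k-1}^\varepsilon(x)$, $A_{k-1}^\varepsilon(n;x)$ by substituting that ansatz back into \eqref{3.3}. First I would rewrite the left-hand side of \eqref{3.4} in terms of the basic sums of Theorem \ref{Th1}. Since $\sum_{i=0}^{n-1}\varepsilon^i i!\,[i^k x^k + U_k^\varepsilon(x)]\,x^i = x^k S_k^\varepsilon(n;x) + U_k^\varepsilon(x)\,S_0^\varepsilon(n;x)$, the assertion \eqref{3.4} is equivalent to the identity
\begin{align}
x^k S_k^\varepsilon(n;x) + U_k^\varepsilon(x)\,S_0^\varepsilon(n;x) = V_{k-1}^\varepsilon(x) + A_{k-1}^\varepsilon(n;x)\,\varepsilon^{n-1}\,n!\,x^n , \label{key}
\end{align}
which is the relation I would actually verify.

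Next I would multiply the recurrence \eqref{3.3} by $x^k$ and, in the resulting sum, replace each term $x^{k+1}S_\ell^\varepsilon(n;x)$ (for $1\le \ell\le k+1$) by $x^{k+1-\ell}$ times the right-hand side of the lower instance of \eqref{key}, namely $x^{k+1-\ell}\bigl(V_{\ell-1}^\varepsilon + A_{\ell-1}^\varepsilon\,\varepsilon^{n-1}n!\,x^n - U_\ell^\varepsilon\,S_0^\varepsilon\bigr)$; splitting $x^{k+1}=x^{k+1-\ell}\cdot x^\ell$ this way keeps everything polynomial in $x$ and $n$. The key structural point is that the resulting equation must hold for every $n$, and the three objects occurring in it are of different nature: the constant-in-$n$ terms, the cumulative sum $S_0^\varepsilon(n;x)$, and the block $\varepsilon^{n-1}n!\,x^n$ carrying polynomial-in-$n$ coefficients. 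Matching the coefficient of $S_0^\varepsilon(n;x)$ reproduces, after multiplication by $\varepsilon$, exactly \eqref{3.5}; matching the polynomial coefficient of $\varepsilon^{n-1}n!\,x^n$ reproduces \eqref{3.7} (here the term $-\varepsilon^n n!\,n^k x^n$ of \eqref{3.3} supplies the inhomogeneous piece $-n^k x^k$); and the $n$-independent remainder reproduces \eqref{3.6}, the $\delta_{0k}$ correction being felt only at the degenerate level $k=0$ fixed by the initial data.

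To make this rigorous I would run the argument as a strong induction on $k$, which sidesteps any abstract independence claim. The base cases $U_1^\varepsilon=x-\varepsilon$, $V_0^\varepsilon=-\varepsilon$, $A_0^\varepsilon=1$ give \eqref{key} at $k=1$ by a direct telescoping: writing $x S_1^\varepsilon+(x-\varepsilon)S_0^\varepsilon=\sum_{m=1}^{n}\varepsilon^{m-1}m!\,x^m-\sum_{m=0}^{n-1}\varepsilon^{m-1}m!\,x^m$, the two sums cancel except at the endpoints and leave $\varepsilon^{n-1}n!\,x^n-\varepsilon$. For the inductive step one takes \eqref{3.3} with its top index isolated, i.e. solves it for $S_{k+1}^\varepsilon$, and substitutes the already-established instances of \eqref{key} for $S_0^\varepsilon,\dots,S_k^\varepsilon$. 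Because the factor $x^{k+1}$ multiplying $S_{k+1}^\varepsilon$ in \eqref{key} absorbs the $x^{-1}$ produced when dividing by the coefficient $\varepsilon x$ of $S_{k+1}^\varepsilon$, the whole computation stays within polynomials, and collecting the three streams of terms yields \eqref{key} at level $k+1$ precisely when $U,V,A$ obey \eqref{3.5}--\eqref{3.7}; uniqueness of $U,V,A$ then follows since each recurrence determines the next index from the initial datum.

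I expect the main obstacle to be the careful bookkeeping in the term-matching step: one must confirm that, after the substitution, the coefficients of $S_0^\varepsilon$, of the monomials $n^j\varepsilon^{n-1}n!\,x^n$, and of the constants separate cleanly, and that the binomial sums collapse into the exact form of \eqref{3.5}--\eqref{3.7}. A secondary point to be checked is the consistency of the initial conditions with the recurrences at the boundary $k=0$ (formally $U_0^\varepsilon=-1$, $V_{-1}^\varepsilon=0$, $A_{-1}^\varepsilon=0$), which is where the $\delta_{0k}$ terms in \eqref{3.5}--\eqref{3.7} originate; once these are pinned down, every higher index is determined uniquely.
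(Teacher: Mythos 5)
The paper itself contains no proof of Theorem \ref{Th2} (it defers to \cite{bd12,bd13}), but your argument is correct and is the natural route those references take: reduce \eqref{3.4} to $x^k S_k^\varepsilon + U_k^\varepsilon S_0^\varepsilon = V_{k-1}^\varepsilon + A_{k-1}^\varepsilon\,\varepsilon^{n-1}n!\,x^n$, verify $k=1$ by telescoping, and induct by isolating $S_{k+1}^\varepsilon$ in \eqref{3.3}, with the recurrences \eqref{3.5}--\eqref{3.7} exactly the conditions making the three streams of terms collapse. One tiny caveat on your boundary remark: with $V_{-1}^\varepsilon=0$ the $k=0$ instance of \eqref{3.6} would give $V_0^\varepsilon=-\varepsilon x$ rather than the stated $-\varepsilon$, but this is immaterial since the theorem asserts the recurrences only for $k\in\mathbb{N}$ with $V_0^\varepsilon=-\varepsilon$ supplied as initial data.
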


Proofs for the above theorems can be found in \cite{bd12,bd13}. Recurrence relations \eqref{3.5}-\eqref{3.7} can be taken as definitions of polynomials $U_k^\varepsilon(x) , V_k^\varepsilon (x)$ and $A_k^\varepsilon (n; x)$. Connection of $U_k^\varepsilon(x)$ and  $V_k^\varepsilon (x)$ to $A_k^\varepsilon (n; x)$ follows from the above relations \eqref{3.5}-\eqref{3.7}. Hence, in the sequel
it is sufficient to investigate properties of the generating  polynomials $A_k^\varepsilon (n; x)$, because properties of $U_k^\varepsilon(x)$ and  $V_k^\varepsilon (x)$ can be derived from \eqref{3.2}.

\begin{thm}
Let $A_k^\varepsilon (n; x) = \sum_{j=0}^k A_{kj}^\varepsilon(n) \, x^j$. Then  there is the following  recurrence formula for calculation of $A_{kj}^\varepsilon(n)$:
\begin{align}
&  \binom{k+1}{k+1} A_{kj}^\varepsilon(n) + \binom{k+1}{k} A_{k-1,j-1}^\varepsilon(n) + \binom{k+1}{k-1} A_{k-2,j-2}^\varepsilon(n) + ... \nonumber\\ &+ \binom{k+1}{k-j+1} A_{k-j,0}^\varepsilon(n) \,
   = \begin{cases}  &\varepsilon\, A_{k-1, j}^\varepsilon(n)\, , \quad j = 0, 1, ..., k-1 , \\
      &n^k \, , \qquad \qquad \, \, \,  j = k \, , \end{cases}   \label{3.8}
\end{align}
where $k =  1, 2, 3, ...$ and $A_{00}^\varepsilon (n) = 1$.
\end{thm}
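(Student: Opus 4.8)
The plan is to derive \eqref{3.8} directly from the defining recurrence \eqref{3.7} by expanding in powers of $x$ and matching coefficients, since \eqref{3.8} is nothing but the coefficient-wise version of \eqref{3.7}. First I would insert the expansion $A_{\ell-1}^\varepsilon(n;x)=\sum_{r=0}^{\ell-1}A_{\ell-1,r}^\varepsilon(n)\,x^r$ into the first sum of \eqref{3.7}, obtaining the double sum $\sum_{\ell=1}^{k+1}\sum_{r=0}^{\ell-1}\binom{k+1}{\ell}A_{\ell-1,r}^\varepsilon(n)\,x^{k-\ell+1+r}$. Setting $m=\ell-1$ rewrites this as $\sum_{m=0}^{k}\sum_{r=0}^{m}\binom{k+1}{m+1}A_{m,r}^\varepsilon(n)\,x^{k-m+r}$, so that each monomial carries the exponent $k-m+r$.

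Next I would fix a target exponent $j$ (the subscript appearing in \eqref{3.8}) and collect its coefficient. A term of the double sum contributes to $x^j$ exactly when $k-m+r=j$, i.e. $r=j-k+m$, and the constraint $0\le r\le m$ then forces $k-j\le m\le k$. Substituting $m=k-s$ with $s=0,1,\dots,j$ converts the $x^j$-coefficient coming from the first sum into $\sum_{s=0}^{j}\binom{k+1}{k-s+1}A_{k-s,\,j-s}^\varepsilon(n)$, which is precisely the left-hand side of \eqref{3.8}. It then remains to read off the right-hand side from the remaining two terms of \eqref{3.7}: the term $-\varepsilon A_{k-1}^\varepsilon(n;x)$ contributes $-\varepsilon A_{k-1,j}^\varepsilon(n)$ to the coefficient of $x^j$, but only for $0\le j\le k-1$, while the inhomogeneous term $-n^k x^k$ contributes $-n^k$ solely at $j=k$. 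Equating the coefficient of each $x^j$ in \eqref{3.7} to zero and transposing these two terms yields the case $\sum_{s=0}^{j}\binom{k+1}{k-s+1}A_{k-s,j-s}^\varepsilon(n)=\varepsilon A_{k-1,j}^\varepsilon(n)$ for $j=0,\dots,k-1$ and the case $\sum_{s=0}^{k}\binom{k+1}{k-s+1}A_{k-s,k-s}^\varepsilon(n)=n^k$ for $j=k$, i.e. exactly \eqref{3.8}; the initial value $A_{00}^\varepsilon(n)=1$ matches $A_0^\varepsilon(n;x)=1$.

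The computation is entirely routine once the bookkeeping is set up; the only point requiring care—and the place where a careless argument would fail—is the boundary behaviour at $j=k$. There the $x^k$-coefficient of $A_{k-1}^\varepsilon$ vanishes (its $x$-degree being $k-1$), so the term $-\varepsilon A_{k-1,j}^\varepsilon$ drops out precisely where the inhomogeneity $-n^k$ enters, and this is what produces the two-case structure of \eqref{3.8}. Finally, the finite expansion $A_k^\varepsilon(n;x)=\sum_{j=0}^{k}A_{kj}^\varepsilon(n)\,x^j$ used throughout is justified by a short induction on $k$ applied to \eqref{3.7}, which gives $\deg_x A_k^\varepsilon\le k$ starting from $A_0^\varepsilon(n;x)=1$, so that the index $s$ in the collected coefficient never runs past the available terms.
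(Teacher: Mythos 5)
Your proposal is correct and follows essentially the same route as the paper: substitute the expansion $A_{\ell-1}^\varepsilon(n;x)=\sum_r A_{\ell-1,r}^\varepsilon(n)x^r$ into the recurrence \eqref{3.7} and equate coefficients of $x^j$, with the inhomogeneous term $-n^k x^k$ accounting for the separate case $j=k$. Your bookkeeping (the re-indexing and the remark on the degree bound $\deg_x A_k^\varepsilon\le k$) is in fact somewhat more explicit than the paper's, but the argument is the same.
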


\begin{proof}
Substituting $A_k^\varepsilon (n; x) = \sum_{j=0}^k A_{kj}^\varepsilon(n) \, x^j$ into recurrence relation \eqref{3.7} we have
\begin{align}
\sum_{j=0}^{\ell -1} x^j  \sum_{\ell =1}^{k+1} \binom{k+1}{\ell}  A_{\ell-1,j}^\varepsilon(n) x^{k +1 -\ell}  - \varepsilon  \sum_{j=0}^{k -1} A_{k-1,j}^\varepsilon(n) x^j - n^k  x^{k} \, = 0  \,,  \label{3.9}
\end{align}
where $\quad k = 1,2,3, ... $ Looking for terms with $x^i$ and $x^k$ we obtain
\begin{align}
\bullet\, {x^i:} \quad  &\binom{k+1}{k+1} A_{ki}^\varepsilon(n) + \binom{k+1}{k} A_{k-1,i-1}^\varepsilon(n) + \binom{k+1}{k-1} A_{k-2,i-2}^\varepsilon(n) \nonumber\\& + ... + \binom{k+1}{k-i+1} A_{k-i,0}^\varepsilon(n) - \varepsilon\, A_{k-1, i}^\varepsilon(n) =  0 \,,  \, \, \, i = 0, 1, ..., k-1 \, . \label{3.10}  \\
\bullet \, {x^k:} \quad  &\binom{k+1}{k+1} A_{kk}^\varepsilon(n) + \binom{k+1}{k} A_{k-1,k-1}^\varepsilon(n) + \binom{k+1}{k-1}  A_{k-2,k-2}^\varepsilon(n) \nonumber \\ &+ ... + \binom{k+1}{1} A_{00}^\varepsilon(n) - n^k  =  0 \, .  \label{3.11}  \end{align}
\end{proof}

 Using recurrence relation \eqref{3.8} we can  calculate  $A_{kj}^\varepsilon(n)$. Performing calculation for  $k = 1, 2, ..., 5$, we obtain $A_{k}^\varepsilon(n; x)$ presented below.

\begin{align}
 A_0^\varepsilon (n; x) = &A_{00}^\varepsilon (n) = 1.   \label{3.12} \\
 A_{1}^\varepsilon(n; x) = &\sum_{j =0}^1 A_{1j}^\varepsilon(n) \, x^j = (n  -2) x  + \varepsilon.  \label{3.13} \\
 A_{2}^\varepsilon(n; x) = &\sum_{j =0}^2 A_{2j}^\varepsilon(n) \, x^j = (n^2-3n +3) x^{2} \nonumber \\ &+ (n-5)\varepsilon x  + 1. \label{3.14} \\
  A_{3}^\varepsilon(n; x) = &\sum_{j =0}^3 A_{3j}^\varepsilon(n) \, x^j = (n^3 - 4n^2 +6n-4) x^{3} \nonumber \\
         &+ (n^2 -7n  +17) \varepsilon x^{2} + (n-9) x + \varepsilon. \label{3.15} \\
  A_{4}^\varepsilon(n; x) = &\sum_{j =0}^4 A_{4j}^\varepsilon(n) \, x^j =  (n^4 -5n^3 +10n^2 -10n +5) x^{4} \nonumber \\  &+  (n^3 - 9 n^2 + 31n - 49) \varepsilon x^{3} \nonumber \\ &+ (n^2 -12n + 52) x^{2} + (n - 14) \varepsilon  x + 1. \label{3.16} \\
   A_{5}^\varepsilon(n; x) = &\sum_{j =0}^5 A_{5j}^\varepsilon(n) \, x^j = (n^5 -6 n^4 +15 n^3 - 20 n^2 +15 n  -6) x^5 \nonumber \\ &+ (n^4 - 11n^3
+ 49n^2 - 111n  +129)\varepsilon x^4 \nonumber \\ &+ (n^3 - 15n^2 + 88 n  - 246) x^3 \nonumber \\ &+ (n^2 - 18 n +121)\varepsilon x^2 + (n - 20) x + \varepsilon . \label{3.17}
\end{align}

In the above examples we can note the following properties:
\begin{align}
&\bullet \, \, A_{kj}^\varepsilon(n) =  \sum_{i=0}^j a_{kj,i} \, n^i \, , \quad a_{kj,j} = \varepsilon^{k+j}\,, \quad \varepsilon = \pm 1. \label{3.18} \\
&\bullet \, \, A_{kk}^\varepsilon(n) = \sum_{i=0}^k a_{kk,i}\, n^i  \, , \quad a_{kk,i} = (-1)^{k+i} \left(\begin{aligned} k&+1\\ i&+1\end{aligned}\right) . \label{3.19} \\
&\bullet \, \, A_{k1}^\varepsilon(n) =  \sum_{i=0}^1 a_{k1,i} \, n^i  = \left(n - \frac{k(k+3)}{2}\right) \varepsilon^{k+1} \, , \label{3.20}
\end{align}
where $\frac{k(k+3)}{2}$ in \eqref{3.20} is the sum of $2 +3 + ... + (k+1)$.

Using expressions \eqref{3.12}-\eqref{3.17} we can write some useful values of $A_k^\varepsilon (n; x)$ in Table 2.

\begin{table}
 \begin{center}
 {\begin{tabular}{|c|c|c|c|c|c|c|c|c|c|c|c|}
 \hline \ & \ & \ & \ & \ & \ & \  \\
 k & 1  & 2 & 3 & 4 &  5 & 6  \\
  \hline \  & \  &  \ & \ & \ & \ & \  \\
 $A_k^\varepsilon(0;1)$ &  1  & \ -2 + $\varepsilon$ & 4 - 5$\varepsilon$ & -13 + 18$\varepsilon$ & 58 -63$\varepsilon$ & -272 + 251$\varepsilon$  \\
  \hline \  & \  & \  &  & \ & \ & \  \\
 $A_k^\varepsilon(1;1)$ & 1 & -1+$\varepsilon$ & 2-4$\varepsilon$ & -9 +12 $\varepsilon$ & 43 - 39$\varepsilon$ & -192+162$\varepsilon$  \\
  \hline \  & \  &  \ & \ & \ & \ & \  \\
 $A_k^\varepsilon(0;-1)$ &  1  & \ 2 + $\varepsilon$ & 4 + 5$\varepsilon$ & 13 + 18$\varepsilon$ & 58 +63$\varepsilon$ & 272 + 251$\varepsilon$  \\
  \hline \  & \  & \  &  & \ & \ & \  \\
 $A_k^\varepsilon(1;-1)$ & 1 & 1+$\varepsilon$ & 2+4$\varepsilon$ & 9 +12 $\varepsilon$ & 43 + 39$\varepsilon$ & 192+162$\varepsilon$  \\
 \hline
\end{tabular}}{}
\vspace{4mm}
\caption{ The first six  values of $A_k^\varepsilon(n;x)$, where $n=0,1$ and $x =\pm 1$. } 
\end{center}
\end{table}

Denoting $A_k^{+1} (n; x)\equiv A_k^{+} (n; x)$ and $A_k^{-1} (n; x)\equiv A_k^{-} (n; x)$ from Table 2 we have (cf. \cite{sloane})
\begin{align}
&A_k^{+} (0;1): \, \, \, 1, \, -1, \, -1, \, 5, \, -5, \, -21, \, ...  \quad cf. \, \, \,  A014619  \label {3.21} \\
&A_k^{-} (0;1): \, \, \, 1, \, -3, \, 9, \, -31, \, 121, \, -523, \, ... \quad  cf. \, \, \,  A040027  \label{3.22} \\
&A_k^{+} (1;1): \, \, \, 1, \, 0, \, -2, \, 3, \, 4, \, -30, \, ... \quad  cf. \, \, \,  A007114   \label{3.23} \\
&A_k^{-} (1;1): \, \, \, 1, \, -2, \, 6, \, -21, \, 82, \, -354, \, ...  \quad  cf. \, \, \, A032347 \label{3.24} \\
&A_k^{+} (0;-1): \, \, \, 1, \, 3, \, 9, \, 31, \, 121, \, 523, \, ... \quad  cf. \, \, \, A040027  \label{3.25} \\
&A_k^{-} (0;-1): \, \, \, 1, \, 1, \, -1, \, -5, \, -5, \, 21, \, ... \quad  cf. \, \, \, A014619  \label{3.26} \\
&A_k^{+} (1;-1): \, \, \, 1, \, 2, \, 6, \, 21, \, 82, \, 354, \, ...  \quad  cf. \, \, \, A032347  \label{3.27} \\
&A_k^{-} (1;-1): \, \, \, 1, \, 0, \, -2, \, -3, \, 4, \, 30, \, ...  \quad  cf. \, \, \, A007114  \, . \label{3.28}
\end{align}

 $A_k^\varepsilon (n; x)$ in sequences \eqref{3.21}-\eqref{3.28}  overlap (up to the sign) with related elements in the on-line encyclopedia of integer sequences \cite{sloane}. Using \eqref{3.21}-\eqref{3.28} it is worth to present also $U_k^\varepsilon (x)$ for $x =\pm 1$ and $\varepsilon =\pm 1,$ see
 below.

\begin{align}
&U_k^{+}(1): \, \, \, 0, \, 1, \, -1, \, -2, \, 9, \, -9, \, ... \quad  cf. \, \, \,  A000587  \label{3.29} \\
&U_k^{-}(1): \, \, \, 2, \, -5, \, 15, \, -52, \, 203, \, -877, \, ... \quad  cf. \, \, \, A000110 \label{3.30}  \\
&U_k^{+}(-1): \, \, \, -2, \, -5, \, -15, \, -52, \, -203, \, -877, \, ... \quad  cf. \, \, \, A000110 \label{3.31} \\
&U_k^{-}(-1): \, \, \, 0, \, 1, \, 1, \, -2, \, -9, \, -9, \, ... \quad  cf. \, \, \, A000587 \, .   \label{3.32}
\end{align}

It is worth pointing out that $-U_k^{+}(-1) = B_{k+1} $ for $k \in \mathbb{N},$ where $B_{k}$ are the Bell numbers.
Recall that the Bell number $B_k$ is equal to the number of partitions of a set of $k$ elements. These numbers satisfy the recurrence relation
\begin{align}
B_{k+1} = \sum_{\ell =0}^{k} \binom{k}{\ell}  \, B_{\ell} \,, \qquad B_0 = 1 . \label{3.33}
\end{align}

Taking limit $n \to \infty$ in \eqref{3.4}  and then replacing $i$ by $n$  one obtains
\begin{align}
\sum_{n=0}^\infty n! \,[n^k x^k + U_k^\varepsilon (x)]\, x^n = V_k^\varepsilon (x)   \,.   \label{3.34}
\end{align}
Summation formula \eqref{3.34} can be generalized in
\begin{align}
\sum_{n=0}^\infty n! \,P_k^\varepsilon (n; x)\, x^n = Q_k^\varepsilon (x)   \,,   \label{3.35}
\end{align}
where polynomials $P_k^\varepsilon (n; x)$ and $Q_k^\varepsilon (x)$ are
\begin{align}
P_k^\varepsilon (n; x) = \sum_{j=1}^k C_j \,[n^j x^j + U_j^\varepsilon (x)]\,, \quad Q_k^\varepsilon (x) = \sum_{j=1}^k C_j \, V_j^\varepsilon (x) \label{3.36}
\end{align}
and $ C_j \in \mathbb{Q} .$  Summation formula \eqref{3.36} is the general one, $p$-adic invariant, valid for all $x \in \mathbb{Z}$ and $ Q_k^\varepsilon (x) \in \mathbb{Q} .$

\section{Concluding remarks}

Summarizing, in this paper we presented a brief review of previously obtained particular results on summation of $p$-adic functional series of the form $\sum_{n=1}^\infty n! P_k^\varepsilon (n; x) x^n .$ Some new results are contained in section 3, particularly in formulas \eqref{3.34}-\eqref{3.36}. Significant role of the polynomials $A_k^\varepsilon (n; x)$ is emphasized, because all information on summability of \eqref{3.34} is coded in these polynomials. Moreover, the polynomials $A_k^\varepsilon (n; x)$ for $x =\pm 1$ and $\varepsilon =\pm 1$ are related to some well known
sequences of integer numbers, including the Bell numbers. In \cite{bd13} some aspects of the more general series of the type $\sum_{n=1}^\infty (n + \nu)! P_k^\varepsilon (n; x) x^{\alpha n+ \beta}$ are considered, which can be easily connected to results obtained in section 3.

It is worth mentioning  that power series everywhere convergent on $\mathbb{R}$ and all $\mathbb{Q}_p$, as well as their adelic aspects, are considered in \cite{bd3,bd6}. Kurepa's hypothesis on the left factorial is investigated in \cite{bd10} and infinitely many its equivalents are found. Some concrete examples
can be found in author's papers, in particular in \cite{bd8}.

\section*{Acknowledgements}
This work was supported in part by Ministry of Education, Science and Technological Development of the Republic of Serbia, project OI 174012.
I would like to thank the referee for pointing out  misprints.


\begin{thebibliography}{33}

\bibitem{bd01} B. Dragovich and A. Yu. Dragovich, ``A $p$-adic model of DNA sequence and genetic code'', $p$-Adic Numbers Ultrametric Anal.
Appl. \textbf{1} (1), 34--41 (2009) [arXiv:q-bio/0607018 [q-bio.GN]].

\bibitem{freund} L.~Brekke and P.~G.~O.~Freund, ``$p$-adic numbers in physics'', Phys. Rep. \textbf{233}, 1--66 (1993).

\bibitem{vvz} V.~S.~Vladimirov, I.~V.~Volovich and E.~I.~Zelenov, \textit{$p$-Adic Analysis and Mathematical Physics} (World Sci. Publ., Singapore, 1994).

\bibitem{bd02} B. Dragovich, A. Yu. Khrennikov, S. V. Kozyrev and I. V. Volovich, ``On $p$-adic mathematical physics'', $p$-Adic Numbers Ultrametric Anal.
Appl. \textbf{1} (1), 1--17 (2009) 	[arXiv:0904.4205 [math-ph]].

\bibitem{schikhof} W. H. Schikhof, \textit{Ultrametric Calculus: An Introduction to $p$-Adic Analysis} (Cambridge Univ. Press, Cambridge, 1984).

\bibitem{bd1} I. Ya. Arefeva, B. Dragovich and I. V. Volovich,  ``On the $p$-adic summability of the anharmonic oscillator'', Phys. Lett. B
      \textbf{200}, 512--514 (1988).

\bibitem{bd2} B. Dragovich, ``$p$-Adic perturbation series and adelic summability'',  Phys. Lett. B \textbf{256} (3,4), 392--39 (1991).

\bibitem{bd3} B. G. Dragovich, ``Power series everywhere convergent on $R$ and $Q_p$'',  J. Math. Phys.  \textbf{34} (3), 1143--1148 (1992)
[arXiv:math-ph/0402037].

\bibitem{bd4} B. G. Dragovich, ``On $p$-adic aspects of some perturbation series'', Theor. Math. Phys. \textbf{93} (2), 1225--1231 (1993).

\bibitem{bd5} B. G. Dragovich, ``Rational summation of $p$-adic series'',  Theor. Math. Phys. \textbf{100} (3), 1055--1064 (1994).

\bibitem{bd6} B. Dragovich, ``On $p$-adic series in mathematical physics'',  Proc. Steklov  Inst. Math. \textbf{203}, 255--270 (1994).

\bibitem{bd7} B. Dragovich, ``On $p$-adic series with rational sums'', Scientific Review \textbf{19--20}, 97--104 (1996).

\bibitem{bd8} B. Dragovich, ``On some $p$-adic series with factorials'', in \textit{$p$-Adic Functional Analysis}, Lect. Notes Pure
Appl. Math. \textbf{192}, 95--105 (Marcel Dekker, 1997)  [arXiv:math-ph/0402050].

\bibitem{bd9} B. Dragovich, ``On $p$-adic power series'',  in \textit{$p$-Adic Functional Analysis}, Lect. Notes Pure
Appl. Math. \textbf{207}, 65--75 (Marcel Dekker, 1999) [arXiv:math-ph/0402051].

\bibitem{bd10} B. Dragovich, ``On some finite sums with factorials'', Facta Universitatis: Ser. Math. Inform. \textbf{14}, 1--10 (1999) 	
[arXiv:math/0404487 [math.NT]].

\bibitem{murty} M. Ram Murty and S. Sumner, ``On the $p$-adic series $\sum_{n=1}^\infty n^k \cdot n! $'',  in \textit{Number Theory}, CRM Proc.
Lecture Notes \textbf{36}, 219--227 (Amer. Math. Soc., 2004).

\bibitem{alexander}  N.~C.~Alexander, ``Non-vanishing of Uppuluri-Carpenter numbers,''  Preprint  http://tinyurl.com/oo36das.

\bibitem{subedi1} P.~K.~Saikia and D.~Subedi, ``Bell numbers, determinants and series'', Proc. Indian Acad. Sci. (Math. Sci.) \textbf{123} (2),
151--166 (2013).

\bibitem{subedi2} D.~Subedi, ``Complementary Bell numbers and $p$-adic series'', J. Integer Seq. \textbf{17}, 1--14 (2014).

\bibitem{bd11} M. de Gosson, B. Dragovich and A. Khrennikov, ``Some $p$-adic differential equations'',  in \textit{$p$-Adic Functional Analysis}, Lect. Notes Pure Appl. Math. \textbf{222}, 91--112 (Marcel Dekker, 2001) [arXiv:math-ph/0010023].

\bibitem{bd12} B. Dragovich and N. Z. Misic, ``$p$-Adic invariant summation of some $p$-adic functional series'', $p$-Adic Numbers Ultrametric Anal. Appl.
\textbf{6}  (4), 275--283 (2014), [arXiv:1411.4195v1 [math.NT]].

\bibitem{bd13} B. Dragovich, A. Yu. Khrennikov and N. \v Z. Mi\v si\'c, ``Summation of  $p$-adic functional series in integer points'', accepted for publication in Filomat, [arXiv:1508.05079 [math.NT]].

\bibitem{sloane}N.~J.~A.~Sloane, ``The on-line encyclopedia of integer sequences'', https://oeis.org/.


\end{thebibliography}
\end{document}